\newcommand{\keywords}[1]{\par\addvspace\baselineskip
\noindent\keywordname\enspace\ignorespaces#1}
\newtheorem{observation}[lemma]{Observation}
\begin{document}

\mainmatter  % start of an individual contribution

% first the title is needed
\title{On the size of planarly connected crossing graphs}

% a short form should be given in case it is too long for the running head

% the name(s) of the author(s) follow(s) next
%
% NB: Chinese authors should write their first names(s) in front of
% their surnames. This ensures that the names appear correctly in
% the running heads and the author index.
%
\author{Eyal Ackerman\inst{1} \and Bal\'azs Keszegh\inst{2} \and Mate Vizer\inst{2}}

\institute{Department of Mathematics, Physics, and Computer Science,\\
University of Haifa at Oranim, Tivon 36006, Israel.\\
\mailsa \\
\and Alfr\'ed R\'enyi Institute of Mathematics,
Hungarian Academy of Sciences,\\
H-1053 Budapest, Hungary.\\
\mailsb, \mailsc }

%
% NB: a more complex sample for affiliations and the mapping to the
% corresponding authors can be found in the file "llncs.dem"
% (search for the string "\mainmatter" where a contribution starts).
% "llncs.dem" accompanies the document class "llncs.cls".
%

\toctitle{Lecture Notes in Computer Science}
\tocauthor{Authors' Instructions}
\maketitle

\begin{abstract}
We prove that if an $n$-vertex graph $G$ can be drawn in the plane such that
each pair of crossing edges is independent and there is a crossing-free
edge that connects their endpoints, then $G$ has $O(n)$ edges.
Graphs that admit such drawings are related to quasi-planar graphs 
and to maximal $1$-planar and fan-planar graphs.
%A pair of independent and crossing edges in a drawing of a graph is \emph{planarly connected}
%if there is a crossing-free edge that connects endpoints of the crossed edges.
%A graph is a \emph{planarly connected crossing} (PCC) graph, if it admits a drawing
%in which every pair of independent and crossing edges is planarly connected.
%We prove that a PCC graph with $n$ vertices has $O(n)$ edges.
\keywords{planar graphs, crossing edges, crossing-free edge, fan-planar graphs, $1$-planar graphs}
\end{abstract}

\section{Introduction}
\label{sec:Intro}
%%%%%%%%%%%%%%%%%%%%%%%%%%%%%%%%%%%%%%%%%%%%%%%%%%%%%%%%%%%%%%%%%%%%

Throughout this paper we consider graphs with no loops or parallel edges.
A \emph{topological graph} is a graph drawn in the plane with its vertices
as distinct points and its edges as Jordan arcs that connect the corresponding points
and do not contain any other vertex as an interior point.
Every pair of edges in a topological graph has a finite number of intersection points,
each of which is either a vertex that is common to both edges,
or a crossing point at which one edge passes from one side of the other edge to its other side.
% In this work we will be mainly interested in \emph{simple} topological graphs, that is,
% topological graphs in which every pair of edges intersect at most once.
A topological graph is \emph{simple} if every pair of its edges intersect at most once.
A \emph{geometric} graph is a (simple) topological graph in which every edge is a straight-line segment.
If the vertices of a geometric graph are in convex position,
then the graph is a \emph{convex} geometric graph.

Call a pair of independent\footnote{Two edges are \emph{independent} if they do not share a vertex.
Note that in a simple topological graph two crossing edges must be independent.} and crossing edges $e$ and $e'$
in a topological graph $G$ \emph{planarly connected}
if there is a crossing-free edge in $G$ that connects an endpoint of $e$ and an endpoint of $e'$.
A \emph{planarly connected crossing} (PCC for short) topological graph is a topological graph in
which every pair of independent crossing edges is planarly connected.
An abstract graph is a PCC graph if it can be drawn as a topological PCC graph.

Our motivation for studying PCC graphs comes from two examples of topological graphs that satisfy this property:
A graph is \emph{$k$-planar} if it can be drawn as a topological graph in which each edge is crossed at most $k$ times (we call such a topological graph \emph{$k$-plane}).
Suppose that $G$ is an $n$-vertex $1$-planar topological graph with the maximum possible number of edges
(i.e., there is no $n$-vertex $1$-planar graph with more edges than $G$).
Now consider a drawing $D$ of $G$ as a $1$-plane topological graph with the least number of crossings.
Then it is easy to see that $D$ is a simple topological graph.
Moreover, $D$ is a PCC topological graph.
Indeed, if $(u,v)$ and $(w,z)$ are two independent edges that cross at a point $x$ and are not planarly connected,
then we can draw a crossing-free edge $(u,w)$ that consists of the (perturbed) segments $(u,x)$ and $(w,x)$
of $(u,v)$ and $(w,z)$, respectively.
This way we either increase the number of edges in the graph or we are able to replace a crossed
edge with a crossing-free edge and get a $1$-plane drawing of $G$ with less crossings.

Another example for PCC topological graphs are certain drawings of \emph{fan-planar} graphs.
A graph is called \emph{fan-planar} if it can be drawn as a simple topological graph
such that for every edge $e$ all the edges that cross $e$ share a common endpoint on the same side of $e$.
As before, it can be shown (see~\cite[Corollary~1]{KU14}) that such an embedding of a maximum fan-planar graph
with as many crossing-free edges as possible admits a PCC topological graph.

Both $1$-plane topological graphs and fan-planar graphs are sparse, namely,
their maximum number of edges is $4n-8$~\cite{PT97} and $5n-10$~\cite{KU14}, respectively (where $n$ denotes the number of vertices).
Our main result shows that simple PCC topological graphs are always sparse.

\begin{theorem}
\label{thm:main}
Let $G$ be an $n$-vertex topological graph such that
for every two crossing edges $e$ and $e'$ it holds that $e$ and $e'$ are independent 
and there is a crossing-free edge
that connects an endpoint of $e$ and an endpoint of $e'$.
Then $G$ has at most $cn$ edges, where $c$ is an absolute constant.
\end{theorem}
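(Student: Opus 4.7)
My plan is to split the edges of $G$ into the subgraph $P$ of crossing-free edges and the set $C$ of crossing edges, and to show $|C|=O(|P|)$; the planarity bound $|P|\le 3n-6$ then yields the theorem. Note that every edge that crosses $e\in C$ is itself in $C$, so the planar subgraph $P$ is clean.

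The strategy is a charging argument. For each crossing edge $e=(u,v)$, I walk along $e$ from $u$ and record the first edge $e'$ it crosses; the PCC hypothesis then furnishes a planar edge $\phi_u(e)\in P$ that connects an endpoint of $e$ to an endpoint of $e'$. Doing the same from $v$ gives a second planar edge $\phi_v(e)$. This assigns to each crossing edge at most two planar edges, and the main claim I would try to establish is that every $f\in P$ is the image of only $O(1)$ crossing edges under the map $e\mapsto\{\phi_u(e),\phi_v(e)\}$.

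To establish this claim, I would fix a planar edge $f=(a,b)$ and examine the set of crossing edges that charge $f$ through, say, $\phi_u$ with $u=a$. Each such edge leaves $a$ and crosses some edge incident to $b$ first; combining the cyclic orders at $a$ and at $b$ with the topological fact that $f$ is crossing-free, I would aim to bound the number of such edges by a constant. The key geometric picture is that $a$, the first crossing point on $e$, and the corresponding endpoint at $b$ bound a topological triangle whose three sides are pieces of $e$, of the edge at $b$, and of $f$ itself; the planarity of $f$ together with the PCC property applied to crossings \emph{inside} such a triangle should force a recursive structure that limits how many nested configurations are possible.

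The main obstacle will be making this last step rigorous: nothing in the PCC property alone immediately prevents many distinct crossing edges emanating from $a$ from all having their first crossings with edges incident to $b$, so a direct count fails. I expect to need one or both of the following refinements. First, choose the charging map more carefully than the naive ``first crossing'' --- for instance, mark only the two extremal crossings in some cyclic-angular sense, so that the charges distribute more evenly around $a$ and $b$. Second, proceed by induction on $n$: use the charging to exhibit a vertex of small degree, delete it, and apply the induction hypothesis to the remaining PCC drawing. Either way, the heart of the argument is the combinatorics of the cyclic edge-orders at the two endpoints of $f$, constrained by the topological triangles that $f$ forms with pairs of crossing edges --- this is where I expect the $O(1)$ bound per planar edge, and hence the linear bound $|C|=O(n)$, ultimately to come from.
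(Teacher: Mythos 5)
Your plan hinges on the claim that each planar edge $f=(a,b)$ receives only $O(1)$ charges, and this claim is false; as you half-suspect yourself, the PCC hypothesis does nothing to prevent a single planar edge from absorbing linearly many charges. Concretely, in the $9n-O(1)$ construction of Section~\ref{sec:Discussion} there are vertices (the three points placed to the left of the $y$-axis) incident to $\Theta(n)$ crossed edges each, and a crossing-free edge $f=(L_1,L_2)$ joining two of them; $\Theta(n)$ of the crossed edges leaving $L_1$ have their first crossing with an edge incident to $L_2$, so all of them charge $f$ under your map $\phi_{L_1}$. Neither of your proposed repairs addresses this: re-marking ``extremal'' crossings still concentrates charge on the planar edges between high-degree vertices, and the induction-on-$n$ variant presupposes a vertex of small degree, which need not exist (and deleting a vertex does not obviously preserve the first-crossing structure you charge through). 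So the heart of the argument --- the $O(1)$ bound per planar edge --- is not merely unproven but unavailable, and the proposal does not constitute a proof.

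For comparison, the paper takes a genuinely different, global route rather than a local charging one. It splits $E$ into the crossing-free edges $E'$ and the crossed edges $E''$, and then decomposes by the connected components $G'_1,\ldots,G'_k$ of the plane graph $(V,E')$. Crossed edges within one component are confined to single faces of that component; unrolling the boundary of a face realizes them as a convex geometric graph with no $9$ pairwise crossing edges (the PCC property forbids $9$ pairwise crossing independent edges by an outerplanarity count), so the Capoyleas--Pach theorem bounds their number by $16|f_j|$ per face, hence $O(|V_i|)$ per component. Crossed edges between different components are handled by contracting each component to a single vertex along spanning trees: the PCC property forces any two crossing contracted edges to share an endpoint, so the strong Hanani--Tutte theorem makes the component graph $H$ planar, and a further $4$-coloring argument inside each component bounds $|E''_{i,j}|$ linearly in the number of participating vertices. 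The lesson is that the linear bound comes from these structural/extremal inputs (Capoyleas--Pach, Hanani--Tutte, the four color theorem), not from a per-edge discharging; if you want to salvage your approach you would need a fundamentally new idea for distributing charge away from high-degree planar edges.
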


Note that by definition in a simple topological graph every pair of crossing edges must be independent,
therefore, Theorem~\ref{thm:main} holds for PCC simple topological graphs.
We strongly believe that (not necessarily simple) PCC topological graphs also have linearly many edges,
however, our proof currently falls short of showing that.

It follows from Theorem~\ref{thm:main} that $1$-plane and fan-planar graphs have linearly many edges,
however, with a much weaker upper bound than the known ones.
It would be interesting to improve our upper bound and to find the exact maximum size of a PCC (simple) topological graph.
We show that this value is at least $9n-O(1)$ (see Section~\ref{sec:Discussion}),
which implies that not every PCC graph is a (maximum) $1$-plane or fan-planar graph.

PCC graphs are also related to two other classes of topological graphs.
Call a topological graph \emph{$k$-quasi-plane} if it has no $k$ pairwise crossing edges.
According to a well-known and rather old conjecture (see e.g.,~\cite{BMP05,Pa91})
$k$-quasi-plane graphs should have linearly many edges.

\begin{conjecture}\label{conj:k-quasi-plane}
For any integer $k \geq 2$ there is a constant $c_k$ such that
every $n$-vertex $k$-quasi-plane graph has at most $c_kn$ edges.
\end{conjecture}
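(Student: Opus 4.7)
The natural attack is induction on $k$. The base case $k=2$ is immediate from Euler's formula, with $c_2=3$. For the inductive step, assume that every $m$-vertex $(k-1)$-quasi-plane graph has at most $c_{k-1}m$ edges and let $G$ be an $n$-vertex $k$-quasi-plane topological graph with $m$ edges. The aim is to remove $O(n)$ edges from $G$ so that the remaining drawing is $(k-1)$-quasi-plane, after which the induction hypothesis closes the argument.

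First I would reduce to a cleaner topological representation. Using Pach--T\'oth-style lemmas on redundant multiple crossings, one can delete $O(n)$ edges to obtain a simple topological drawing; then consider the \emph{crossing graph} $X(G)$ whose vertices are the edges of $G$, with two adjacent when they cross. By assumption $X(G)$ is $K_k$-free. The heart of the argument should be a charging/discharging scheme that splits the edges of $G$ into a ``sparse'' class (those that participate in few crossings, counted by Euler's formula on the planarization of a suitable subset) and a ``dense'' class, every member of which must belong to many $(k-1)$-cliques in $X(G)$. For dense edges, the hope is to locate a concrete local topological witness, such as a fan of pairwise crossing edges around a common vertex or an $(k-1)$-crossing configuration along a single edge, whose very existence would either force a $K_k$ (contradicting $k$-quasi-planarity) or provide a planar structure that can absorb the charge.

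An alternative route is the recursive topological-Ramsey strategy of Fox--Pach--Suk: in a sufficiently dense topological graph one finds either a large bi-clique of pairwise crossing pairs, giving a $k$-clique, or a partition into balanced pieces that can be handled recursively. This route already delivers $O(n\,\mathrm{polylog}\,n)$ bounds; shaving the polylogarithmic factor to a constant is precisely the content of the conjecture.

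The main obstacle is the step of extracting a $(k-1)$-quasi-plane subdrawing by removing only $O(n)$ edges. No current technique accomplishes this for $k \geq 5$, and a straightforward degeneracy argument in $X(G)$ fails because edges of minimum crossing-degree in $X(G)$ can still participate in many $(k-1)$-cliques. A successful proof will likely require a new topological invariant of edges--something finer than crossing count and sensitive to how the neighborhoods in $X(G)$ realize as planar arrangements--that can be turned into bounded degeneracy in an auxiliary hypergraph of $(k-1)$-cliques. Absent such an invariant, I do not expect the plan above to go through unconditionally; that is why the statement is given as a conjecture rather than a theorem.
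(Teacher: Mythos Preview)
You correctly recognize that this is a conjecture, not a theorem, and the paper does not prove it either. The paper merely states Conjecture~\ref{conj:k-quasi-plane} as motivation, noting that it has been verified only for $k=3$ (Agarwal et al., Pach--Radoi\v{c}i\'c--T\'oth, Ackerman--Tardos), for $k=4$ (Ackerman), and for convex geometric graphs (Capoyleas--Pach), while for $k\ge 5$ the best known bounds are $n(\log n)^{O(\log k)}$ (Fox--Pach) and $O_k(n\log n)$ for simple topological graphs (Suk--Walczak). Your discussion of why the natural inductive and charging approaches stall---in particular, the inability to remove only $O(n)$ edges to drop to $(k-1)$-quasi-planarity, and the failure of crossing-degree degeneracy in $X(G)$---is a fair summary of the obstruction; there is nothing to compare against in the paper, since no proof is offered there.
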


It is easy to see that if $G$ is a PCC simple topological graph, then $G$ is $9$-quasi-plane:
Suppose for contradiction that $G$ contains a set $E'$ of $9$ pairwise crossing edges and let $V'$ be the set of their endpoints.
Since $G$ is a simple topological graph, no two edges in $E'$ share an endpoint, therefore $|V'|=18$.
Let $G'$ be the subgraph of $G$ that is induced by $V'$ and let $E''$ be the crossing-free edges of $G'$.
Clearly $(V',E'')$ is a plane graph.
Moreover, all the edges in $E'$ must lie in the same face $f$ of this plane graph, since they are pairwise crossing.
It follows that $f$ is incident to every vertex in $V'$ and therefore $(V',E'')$ is an outerplanar graph.
Thus, $|E''| \leq 2\cdot 18-3=33$.
On the other hand, since $G'$ is also PCC and no two edges in $E'$ share an endpoint, it follows that $|E''| \geq {{9}\choose{2}}=36$, a contradiction.

Therefore, Conjecture~\ref{conj:k-quasi-plane}, if true, would immediately imply Theorem~\ref{thm:main} for simple topological graphs.
However, this conjecture was only verified for $k=3$~\cite{AT07,AA*97,PRT06}, for $k=4$~\cite{Ack09},
and (for any $k$) for convex geometric graphs~\cite{CP92}.
For $k \geq 5$ the currently best upper bounds on the size of $n$-vertex $k$-quasi-plane graphs are
$n(\log n)^{O(\log k)}$ by Fox and Pach~\cite{FP12,FP14},
and $O_k(n\log n)$ for simple topological graphs by Suk and Walczak~\cite{SW15}.

\medskip

Another conjecture that implies Theorem~\ref{thm:main} (also for topological graphs that are not necessarily simple)
is related to \emph{grids} in topological graphs.
A \emph{$k$-grid} in a topological graph is a pair of edge subsets $E_1,E_2$ such that $|E_1|=|E_2|=k$,
and every edge in $E_1$ crosses every edge in $E_2$.
Ackerman et al.~\cite{AF*14} proved that every $n$-vertex topological graph that does not contain
a $k$-grid with distinct vertices has at most $O_k(n\log^*n)$ edges and conjectured
that this upper bound can be improved to $O_k(n)$.
It is not hard to show, as before, that a PCC graph does not contain an $8$-grid with distinct vertices.
Therefore, this conjecture, if true, would also imply Theorem~\ref{thm:main}.

\paragraph{Outline.}
We prove Theorem~\ref{thm:main} in the following section.
In Section~\ref{sec:Discussion} we give a lower bound on the maximum size of a PCC simple topological graph,
generalize the notion of planarly connected edges, and conclude with some open problems.

%%%%%%%%%%%%%%%%%%%%%%%%%%%%%%%%%%%%%%%%%%%%%%%%%%%%%%%%%%%%%%%%%%%%
\section{Proof of Theorem~\ref{thm:main}}
\label{sec:proof}
%%%%%%%%%%%%%%%%%%%%%%%%%%%%%%%%%%%%%%%%%%%%%%%%%%%%%%%%%%%%%%%%%%%%

Let $G=(V,E)$ be an $n$-vertex topological graph such that
for every two crossing edges $e$ and $e'$ it holds that $e$ and $e'$ are independent and
there is a crossing-free edge that connects an endpoint of $e$ and an endpoint of $e'$.
Denote by $E' \subseteq E$ the set of crossing-free (planar) edges in $G$,
and by $E''=E \setminus E'$ the set of crossed edges in $G$.
Since $G'=(V,E')$ is a plane graph, we have $|E'| \le 3n$, so it remains to prove that $|E''|=O(n)$.

Let $G'_1=(V_1,E'_1),\ldots,G'_k=(V_k,E'_k)$ be the connected components of the graph $G'$,
and let $E''_{i,j} = \{ (u,v) \in E'' \mid u \in V_i \textrm{ and } v \in V_j \}$.

\begin{lemma}
\label{lem:E''_{i,i}}
$|E''_{i,i}| \leq 96|V_i|$ for $1 \leq i \leq k$.
\end{lemma}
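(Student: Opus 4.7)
The plan is to bound $|E''_{i,i}|$ face by face in the plane graph $G'_i = (V_i, E'_i)$. Since every edge of $E'_i$ is crossing-free, any edge $e \in E''_{i,i}$ is a Jordan arc whose interior meets no edge of $E'_i$; consequently $e$ must lie entirely inside a single face of $G'_i$, with its two endpoints on the boundary walk $\partial f$ of that face. Write $E''_{i,i}(f)$ for the subset of $E''_{i,i}$ drawn inside a given face $f$.

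Because $G'_i$ is connected, each face $f$ is simply connected and may be viewed as a topological disk $D_f$ whose boundary is identified with $\partial f$. The edges of $E''_{i,i}(f)$ are then Jordan chords of $D_f$ with endpoints on $\partial D_f$. Assuming $G$ is simple, any two such chords meet at most once, and a standard Jordan-curve parity argument shows that two chords cross if and only if their endpoints interleave along $\partial f$. Hence the crossing pattern of $E''_{i,i}(f)$ inside $D_f$ is identical to that of a convex geometric graph on the $|\partial f|$ boundary positions.

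The short argument already given in the introduction shows that every simple PCC topological graph is $9$-quasi-planar, and this property is inherited by the subgraph $E''_{i,i}(f)$. Under the convex-geometric interpretation above, this means that no $9$ chords of $E''_{i,i}(f)$ can be pairwise interleaving; the Capoyleas--Pach theorem~\cite{CP92} applied with $k=9$ then yields $|E''_{i,i}(f)| \leq 2(9-1)\,|\partial f| - \binom{17}{2} \leq 16\,|\partial f|$. Summing over all faces of $G'_i$ and using the standard identity $\sum_f |\partial f| = 2|E'_i|$ together with the planar bound $|E'_i| \leq 3|V_i|$, we obtain
\[
|E''_{i,i}| \;=\; \sum_f |E''_{i,i}(f)| \;\leq\; 16\sum_f |\partial f| \;=\; 32\,|E'_i| \;\leq\; 96\,|V_i|.
\]

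The only non-routine step I foresee is the reduction of the topological chord structure inside a face to a convex geometric graph on $\partial f$: one needs to justify carefully that each crossed edge really is confined to a single face and that two chords cross in $D_f$ precisely when their endpoints interleave on $\partial f$ (this is where simplicity of $G$ together with a parity argument is used). Once this ``topological convex position'' reduction is in place, everything else is a direct combination of the Capoyleas--Pach bound with Euler's formula, and the constants fall out to exactly $2(9-1)\cdot 2 \cdot 3 = 96$.
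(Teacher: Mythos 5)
Your proof follows the paper's argument essentially verbatim: restrict attention to a single face of $G'_i$, lift the crossed edges inside it to chords on the occurrences of vertices along the boundary walk, transfer the ``no $9$ pairwise crossing edges'' property to the resulting convex geometric graph, and apply the Capoyleas--Pach bound together with $\sum_f |\partial f| = 2|E'_i| \leq 6|V_i|$, arriving at the same constant $16\cdot 6 = 96$. The only (harmless) difference is that you invoke simplicity of $G$ to get a two-way correspondence between crossings and interleaving endpoints, whereas the paper --- whose hypothesis does not include simplicity --- needs and proves only the one direction actually used, namely that interleaving endpoints on the boundary of the face force a crossing.
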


\begin{proof}
Assume without loss of generality that $i=1$ and consider the graph $G'_1$.
Let $f_1,\ldots,f_\ell$ be the faces of the plane graph $G'_1$.
For a face $f_j$, let $V(f_j)$ be the vertices that are incident to $f_j$,
and let $E''(f_j)$ be the edges in $E''_{1,1}$ that lie within $f_j$
(thus, their endpoints are in $V(f_j)$).
Denote by $|f_j|$ the size of $f_j$, that is, the length of the shortest closed walk
that visits every edge on the boundary of $f_j$.
Recall that in the Introduction we argued that a PCC simple topological graph is $9$-quasi-plane.
For the same arguments we have the following observation.

\begin{observation}
\label{obs:9-quasi-planar}
There are no $9$ pairwise crossing edges in $E''(f_j)$.
\end{observation}

% \begin{proof}
% Suppose for contradiction that $(x_1,y_1),\ldots,(x_9,y_9) \in E''(f_j)$ are pairwise independent and crossing edges.
% Thus, for every pair of crossing edges there is a distinct crossing-free edge that connects endpoints of the two crossing edges.
% Therefore, there are ${{9}\choose{2}}=36$ crossing-free edges that connect vertices in $\{x_1,\ldots,x_9,y_1,\ldots,y_9\}$.
% Note that all of these edges lie outside of $f_j$ or on its boundary, and therefore they define an outer-planar graph.
% However, an outer-planar graph with $18$ vertices contains at most $2\cdot 18-3=33$ edges.
% \end{proof}

\begin{proposition}
\label{prop:convex-quasi}
$|E''(f_j)| \leq 16|f_j|$, for $1 \leq j \leq \ell$.
\end{proposition}

\begin{proof}
Define first an auxiliary graph $\hat{G}_j$ as follows.
When traveling along the boundary of $f_j$ in clockwise direction,
we meet every vertex in $V(f_j)$ at least once and possibly several times if
the boundary of $f_j$ is not a simple cycle.
Let $v_1,v_2,\ldots,v_{|f_j|}$ be the list of vertices as they appear along the boundary of $f_j$,
where a new instance of a vertex is introduced whenever a visited vertex is revisited.
The edge set of $\hat{G}_j$ corresponds to $E''(f_j)$, however, we make sure to pick the ``correct'' instance of a vertex in $v_1,v_2,\ldots,v_{|f_j|}$
for a vertex in $V(f_j)$ that was visited more than once when traveling along the boundary of $f_j$
(see Figure~\ref{fig:convex-quasi} for an example).
\begin{figure}
    \centering
    \subfigure[A face $f_j$ of $G'_1$]{\label{fig:f_j}
    {\includegraphics[width=5cm]{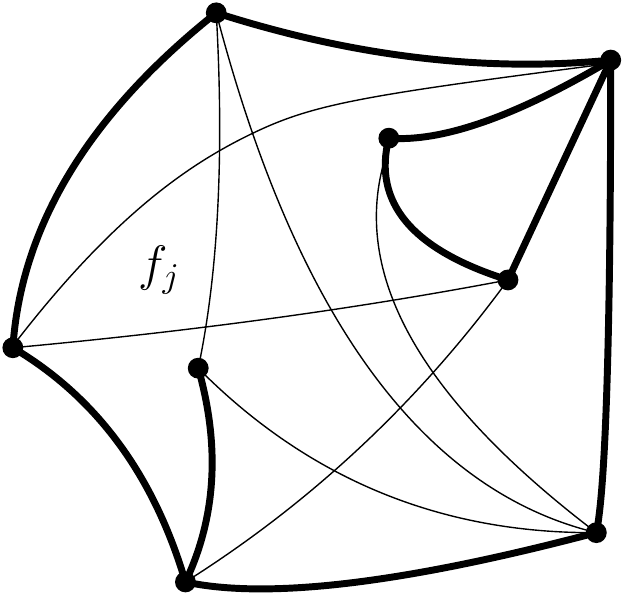}}}
        \hspace{5mm}
    \subfigure[The corresponding graph $\hat{G}_j$.]{\label{fig:G-hat}
    {\includegraphics[width=5cm]{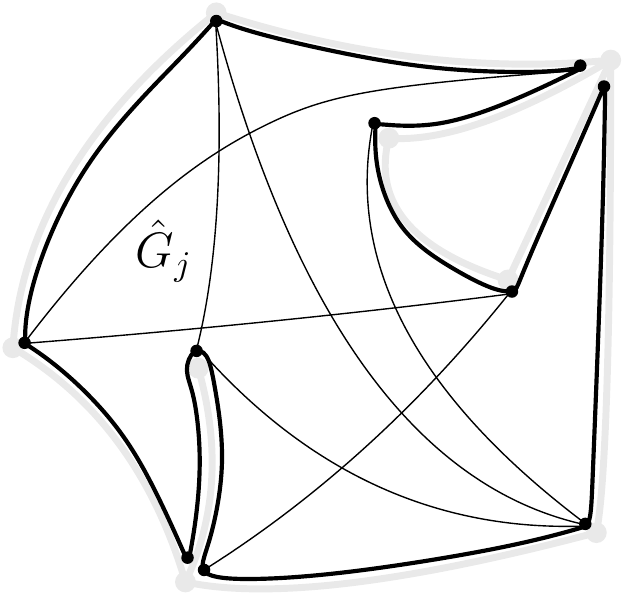}}}
	\caption{Illustrations for the proof of Proposition~\ref{prop:convex-quasi}.}
	\label{fig:convex-quasi}
\end{figure}

Let $\hat{e}_1$ and $\hat{e}_2$ be a pair of crossing edges in $\hat{G}_j$
and let $e_1$ and $e_2$ be their corresponding edges in $G$.
Clearly, $e_1$ and $e_2$ are crossing edges and therefore are independent and planarly connected.
It follows from Observation~\ref{obs:9-quasi-planar} that $\hat{G}_j$ does not contain $9$ pairwise crossing edges.

We now realize the underlying abstract graph of $\hat{G}_j$ as a convex geometric graph:
The vertices $v_1,v_2,\ldots,v_{|f_j|}$ are the vertices of a convex polygon (in that order),
and the edges of $\hat{G}_j$ are realized as straight-line segments.
Suppose that two edges $(v_{i_1},v_{i_2})$ and $(v_{i_3},v_{i_4})$ cross in this realization.
% Then these edges must be independent.
Assume without loss of generality that $i_1 < i_2$, $i_3 < i_4$ and $i_1 < i_3$.
Since these edges are the chords of a convex polygon it must be that $i_1 < i_3 < i_2 < i_4$.
It follows that $(v_{i_1},v_{i_2})$ and $(v_{i_3},v_{i_4})$ also cross in $\hat{G}_j$.
Thus, the realization of $\hat{G}_j$ as a convex geometric graph does not contain $9$ pairwise crossing edges.
% Let $(v_{i_1},v_{i_2})$ and $(v_{i_3},v_{i_4})$ be two independent edges in $\hat{G}_j$.
% Clearly, these edges cross if and only if their corresponding edges cross in $G$.
% Therefore, $\hat{G}_j$ does not contain $9$ pairwise independent and crossing edges by Proposition~\ref{prop:9-quasi-planar}.
% Assume without loss of generality that $i_1 < i_2$, $i_3 < i_4$ and $i_1 < i_3$.
% Then it is not hard to see that $(v_{i_1},v_{i_2})$ and $(v_{i_3},v_{i_4})$ cross an odd number of times if $i_1 < i_3 < i_2 < i_4$
% and an even number of times if $i_1 < i_2 < i_3 < i_4$ or $i_1 < i_3 < i_4 < i_2$.
%
% It is easy to see that two edges in a convex geometric graph cross if and only if their vertices
% alternate in the cyclic order of the vertices along the boundary of the convex hull of the graph.
% It follows that $\hat{G}_j$ can be realized as a convex geometric graph without $9$ pairwise crossing edges.
According to a result of Capoyleas and Pach~\cite{CP92}, an $n$-vertex convex geometric graph
with no $k+1$ pairwise crossing edges has at most ${{n}\choose{2}}$ edges if $n \leq 2k+1$
and at most $2kn - {{2k+1}\choose{2}}$ edges if $n \geq 2k+1$.
Therefore, $|E''(f_j)| \leq 16|f_j|$.
\qed
\end{proof}

We now return to proving that $|E''_{1,1}| = O(|V_1|)$.
Using the fact that $\sum_{j=1}^{\ell} |f_j| = 2|E'_1| \leq 6|V_1|$,
we have $$ |E''_{1,1}| = \sum_{j=1}^{\ell} E''(f_j) \leq \sum_{j=1}^{\ell} 16|f_j| \leq 96|V_1|,$$
which completes the proof of the lemma.
\qed
\end{proof}

It remains to bound the number of edges in $E''$ between different connected components of $G'$.
To this end, we introduce some more notations.
For every $j \neq i$, let $V_{i,j}$ be the vertices of $V_i$ that are connected to some vertex in $V_j$,
i.e., $V_{i,j} = \{ v_i \in V_i \mid (v_i,v_j) \in E'' \textrm{ for some } v_j \in V_j\}$.
Let $H$ be a simple (abstract) graph whose vertex set is $\{u_1,\ldots,u_k\}$
and whose edge set consists of the edges $(u_i,u_j)$ such that $E''_{i,j} \neq \emptyset$.

\begin{lemma}
\label{lem:H-planar}
$H$ is a planar graph.
\end{lemma}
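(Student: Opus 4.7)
The plan is to exhibit $H$ as a planar graph by constructing a crossing-free drawing of $H$ in the plane, starting from the plane drawing of $G$. Two observations drive the argument. First, any representative crossed edge $e \in E''_{i,j}$ of an edge $(u_i,u_j) \in H$ must lie entirely in a single face of $G'$ (since $e$ cannot cross any planar edge) and has both endpoints on the boundary of that face. Second, by the same argument used in the introduction to show that a PCC simple topological graph is $9$-quasi-plane, if two crossed edges $e \in E''_{i,j}$ and $e' \in E''_{l,m}$ cross then PCC supplies a crossing-free edge between their endpoints, which (since planar edges stay inside a single component of $G'$) forces $\{i,j\} \cap \{l,m\} \neq \emptyset$.

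First I would pick a spanning tree $T_i$ of each component $G'_i$ and continuously contract $T_i$ to a single point $u_i$ in the plane drawing of $G'$. Trees are contractible, so this is a planar operation; the non-tree planar edges of $G'_i$ become planar self-loops at $u_i$, and every edge $e \in E''_{i,j}$ becomes a curve from $u_i$ to $u_j$ lying inside one face of the contracted drawing. For each edge $(u_i,u_j) \in H$ pick an arbitrary representative $e \in E''_{i,j}$; this produces a topological drawing of $H$ in the plane, and by the second observation above two such representative curves can cross only when their $H$-edges share a common component.

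To eliminate the remaining crossings I would use the classical edge-swap trick: if the curves for $(u_i,u_j)$ and $(u_i,u_l)$ cross at a point $x$, with underlying representatives $e_{ij}=(a,b)$, $a\in V_i$, and $e_{il}=(c,d)$, $c\in V_i$, replace them by the concatenated curves $e_{ij}[a\to x]+e_{il}[x\to d]$ and $e_{il}[c\to x]+e_{ij}[x\to b]$, perturbed near $x$ so as not to touch. The first still goes from $u_i$ to $u_l$ after contraction and the second from $u_i$ to $u_j$, so the correspondence between curves and $H$-edges is preserved. The swap is legal because crossing edges in $G$ are assumed independent, hence $a \neq c$.

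Finally I would argue that iterating this swap terminates in a planar drawing of $H$. The standard invariant is that the total number of crossings among the chosen curves strictly drops at each step: the crossings of the two new curves with any third curve form the same multiset as the crossings of $e_{ij}$ and $e_{il}$ with that third curve, just redistributed between the two new curves, while the crossing at $x$ itself disappears. I expect the main technical work to lie in this invariance check — in particular, verifying that the perturbation near $x$ creates no new crossings, and reducing to the simple case where any two edges intersect at most once so that the two new curves are themselves simple — but once these points are in place, iterating the swap drives the number of crossings to zero and exhibits $H$ as a planar graph.
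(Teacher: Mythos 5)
Your construction of the drawing of $H$ and your key observation---that two of the resulting curves can cross only if the corresponding edges of $H$ share an endpoint, since a crossing between edges joining four distinct components of $G'$ could not be planarly connected---match the paper's argument exactly (the paper routes each edge of $H$ along spanning-tree paths rather than contracting the trees, which is the same drawing up to homeomorphism and the same overlap-perturbation issue). The genuine gap is in your final step, where you try to eliminate the remaining crossings by iterated swapping. A single swap at $x$ does reduce the total number of crossings by one, but it can destroy the very property that makes swaps available: every crossing being between \emph{adjacent} edges. Concretely, a third curve $g$ representing $(u_j,u_m)$ may cross $e_{ij}$ at a point $y$ on the arc $e_{ij}[a\to x]$; before the swap this is an adjacent crossing (both edges are incident to $u_j$), but after the swap that arc belongs to the new curve representing $(u_i,u_l)$, and if $m\notin\{i,l\}$ the crossing at $y$ is now between two independent edges of $H$. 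No further swap applies to such a pair (a swap needs a common endpoint to preserve the abstract graph, as you note), and you can no longer appeal to the PCC property of $G$ to exclude these crossings, because after a swap the curves no longer trace actual edges of $G$. So the iteration can stall with crossings remaining, and the claim that it ``drives the number of crossings to zero'' is unjustified.

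The statement you are implicitly trying to prove from scratch---that a graph admitting a drawing in which every crossing is between two edges sharing a vertex is planar---is exactly the corollary of the strong Hanani--Tutte theorem that the paper invokes to finish the proof, and it does not have a known proof by naive crossing-count descent; the known arguments go through Kuratowski-type reasoning. The cleanest repair of your write-up is therefore to stop after your (correct) observation that all crossings in the drawing of $H$ are between adjacent edges and cite the strong Hanani--Tutte theorem, as the paper does.
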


\begin{proof}
For $1 \leq i \leq k$ identify $u_i$ with one of the vertices of $G'_i$ and let $T_i$ be a spanning tree of $G'_i$.
We draw every edge $(u_i,u_j)$ of $H$ as follows:
Pick arbitrarily a pair $v_i \in V_i$ and $v_j \in V_j$ such that $(v_i,v_j) \in E''$.
The edge $(u_i,u_j)$ consists of the unique path in $T_i$ from $u_i$ to $v_i$,
the edge $(v_i,v_j)$ and the unique path in $T_j$ from $v_j$ to $u_j$.
See Figure~\ref{fig:G-and-H} for an example.
\begin{figure}
    \centering
    \subfigure[$G'$ has three connected components.]{\label{fig:G}
    {\includegraphics[width=7cm]{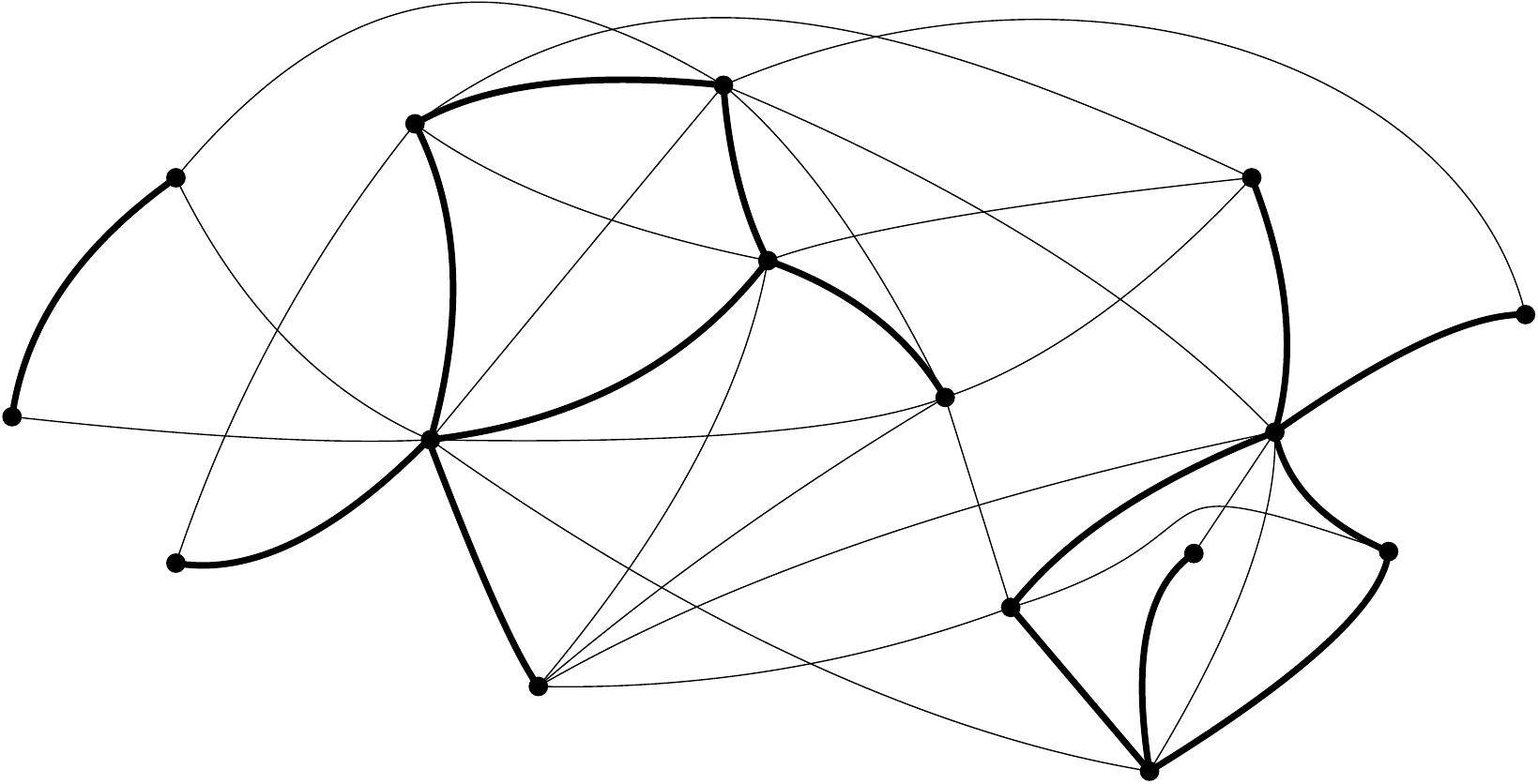}}}
        \hspace{5mm}
    \subfigure[A drawing $H'$ of $H$.]{\label{fig:H}
    {\includegraphics[width=7cm]{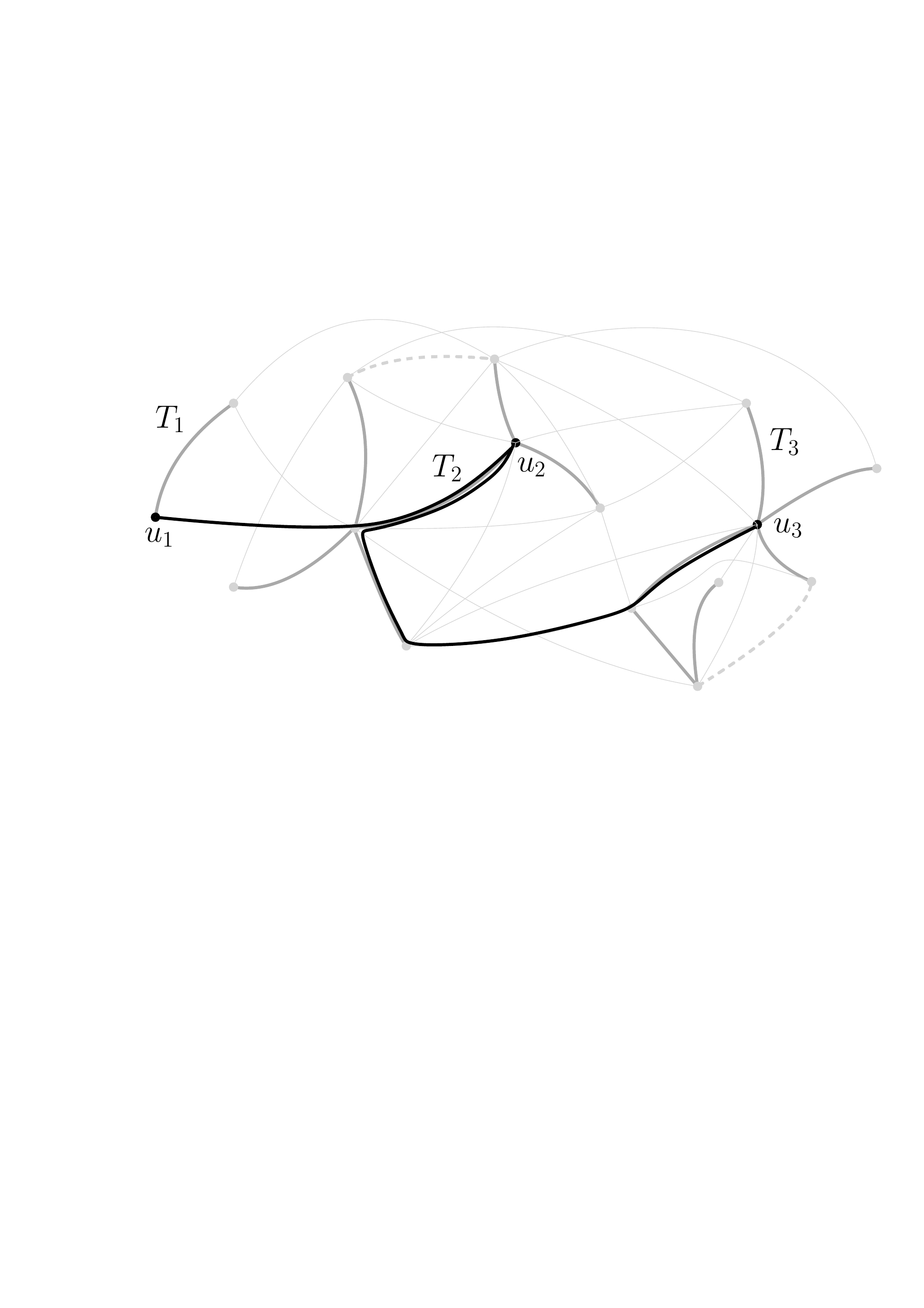}}}
	\caption{Illustrations for the proof of Lemma~\ref{lem:H-planar}.}
	\label{fig:G-and-H}
\end{figure}
Note that in the drawing of $H$ that is obtained this way all the crossing points are inherited from $G$,
however, there are overlaps between edges.
Still, each such (maximal) overlap contains an endpoint of an edge,
and it is not hard to show that the edges in such a drawing
can be slightly perturbed so that all the overlaps are removed and no new crossings are introduced (see~\cite[Lemma~2.4]{AFT12}).
We denote such a drawing of $H$ by $H'$.

The important observation is that if two edges in $H'$ cross, then they must share an endpoint.
Indeed, suppose for contradiction that $(u_a,u_b)$ and $(u_c,u_d)$ are two independent and crossing edges.
Then it follows that $G$ contains two independent and crossing edges $(v_a,v_b)$ and $(v_c,v_d)$, such that $v_a \in V_a$, $v_b \in V_b$, $v_c \in V_c$ and $v_d \in V_d$.
Since these two edges are planarly connected, there should be a crossing-free edge that connects
a vertex in $\{v_a,v_b\}$ with a vertex in $\{v_c,v_d\}$.
However, this is impossible since these four vertices belong to distinct connected components of $G'$.

Finally, a graph that can be drawn so that each crossing is between two edges that share a common vertex is planar:
this follows from the strong Hanani-Tutte Theorem (see, e.g., ~\cite{Ch34,PSS07,T70}).
\qed
\end{proof}

\begin{lemma}
\label{lem:E''_{i,j}}
$|E''_{i,j}| \leq 8(|V_{i,j}|+|V_{j,i}|)$ for every $1 \leq i < j \leq k$.
\end{lemma}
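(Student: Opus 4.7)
The plan is to mimic the proof of Lemma~\ref{lem:E''_{i,i}}, but applied to the two-component plane graph $G'_{i,j} := G'_i \cup G'_j$ instead of a single connected component. The first observation is that every edge of $E''_{i,j}$ lies inside a single face of $G'_{i,j}$, and in fact inside the unique face $f^*$ of $G'_{i,j}$ that is incident to both components: the edge cannot cross a planar edge of $G'$, so it is contained in one face of $G'_{i,j}$; since its endpoints lie in different components, the face must be incident to both, and there is exactly one such face, which is topologically an annulus with boundary walks $W_i \subseteq G'_i$ and $W_j \subseteq G'_j$. Moreover, every vertex of $V_{i,j}$ appears on $W_i$ and every vertex of $V_{j,i}$ on $W_j$.

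Next, I would adapt Proposition~\ref{prop:convex-quasi} to $f^*$. Assuming $E''_{i,j} \neq \emptyset$, pick any $e_0 = (v_0, w_0) \in E''_{i,j}$ and cut $f^*$ along $e_0$, obtaining a topological disk $D^*$ whose boundary, traversed cyclically, consists of $W_i$ (from $v_0$ back to $v_0$), a copy of $e_0$, $W_j$ (from $w_0$ back to $w_0$), and a second copy of $e_0$. Following Proposition~\ref{prop:convex-quasi}, realize the cyclic sequence of vertex appearances along $\partial D^*$ as the vertices of a convex polygon and draw each remaining chord of $E''_{i,j}$ as a straight line segment (a chord that crossed $e_0$ in $f^*$ gets split into two chords of $D^*$). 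Crossings are inherited, so by Observation~\ref{obs:9-quasi-planar} this convex geometric graph contains no $9$ pairwise crossing edges. Crucially, it is bipartite: the two parts are the $V_{i,j}$-appearances on the $W_i$-arc and the $V_{j,i}$-appearances on the $W_j$-arc (boundary vertices not in $V_{i,j} \cup V_{j,i}$ have no incident chords and can be ignored).

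For a bipartite convex geometric graph whose two parts lie on two disjoint arcs of the convex polygon, two chords cross iff the orderings of their endpoints on the two arcs agree, i.e., iff they are comparable in the product order on endpoint indices. Hence by Mirsky's theorem, the absence of a chain of length $9$ in this product order permits a decomposition of the edges into $8$ antichains, each of which is a non-crossing bipartite graph with at most $|V_{i,j}| + |V_{j,i}| - 1$ edges. Summing over the $8$ antichains yields $|E''_{i,j}| \leq 8(|V_{i,j}| + |V_{j,i}|)$, as desired.

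The main obstacle I anticipate is technical: ensuring that the cut along $e_0$ and the possible multiplicity of appearances of a single vertex on $W_i$ or $W_j$ (when $G'_i$ or $G'_j$ is not $2$-connected) do not inflate the final constant beyond $8$. Careful bookkeeping should let one charge each antichain to distinct vertices of $V_{i,j} \cup V_{j,i}$ rather than to appearances.
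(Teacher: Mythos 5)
Your route is genuinely different from the paper's, and it has a real gap in the final accounting. The paper proves this lemma with the Four Color Theorem: properly $4$-color $G'_i$ and $G'_j$, and for each of the $16$ color pairs $(c,c')$ consider the bipartite graph on $V_{i,j}^c \cup V_{j,i}^{c'}$ formed by the corresponding edges of $E''_{i,j}$. If two of its edges crossed they would be independent, yet no crossing-free edge could connect their endpoints (same-colored vertices of one component are non-adjacent in $G'$, and there are no crossing-free edges between different components), contradicting planar connectedness. Hence each such graph is plane and bipartite, so it has at most $2(|V_{i,j}^c|+|V_{j,i}^{c'}|)$ edges, and summing over the $16$ color pairs gives exactly $8(|V_{i,j}|+|V_{j,i}|)$ --- a bound that is automatically in terms of \emph{distinct} vertices.

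The gap in your argument is exactly the ``careful bookkeeping'' you defer, and it is not a technicality. Your convex realization lives on the cyclic sequence of vertex \emph{appearances} along $W_i$ and $W_j$, plus the extra boundary points created by the cut along $e_0$ (which you need anyway, since an arc that crossed $e_0$ ends in the interior of a copy of $e_0$ and is not a chord between vertex appearances). The Mirsky decomposition therefore bounds $|E''_{i,j}|$ by $8(A_i+A_j)$, where $A_i$, $A_j$ count appearances; a vertex $v$ can appear $\deg_{G'_i}(v)$ times on $W_i$ when $G'_i$ is not $2$-connected, so this is only $O(|V_i|+|V_j|)$, not $O(|V_{i,j}|+|V_{j,i}|)$. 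That weaker bound does not feed into the rest of the paper, whose final summation relies on $\sum_{j\neq i}|V_{i,j}|=O(|V_i|+\deg_H(u_i))$, whereas $\sum_{j\neq i}(|V_i|+|V_j|)$ can be superlinear. Worse, the per-antichain bound you would need in terms of distinct vertices is false: a crossing-free $K_{2,2}$ with $u,v\in V_i$ on the inner boundary circle of the annulus and $x,y\in V_j$ on the outer one is a single antichain with $4>|V_{i,j}|+|V_{j,i}|-1=3$ edges. So without some additional device --- such as the paper's $4$-coloring, which forces the same-component endpoints of any crossing pair under consideration to be non-adjacent in $G'$ --- the annulus/convex-position/Dilworth approach does not yield the stated inequality.
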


\begin{proof}
Since $G'_i$ and $G'_j$ are planar graphs, we can properly color their vertices with four colors.
Denote the colors by $1,2,3,4$, and let $V_{i,j}^c$ (resp., $V_{j,i}^c$) be the vertices of color $c$ in $V_{i,j}$ (resp., $V_{j,i}$).
We claim that the number of edges in $E''_{i,j}$ that connect a vertex from $V_{i,j}^c$ and a vertex from $V_{j,i}^{c'}$
is at most $2(|V_{i,j}^c|+|V_{j,i}^{c'}|)$ for every $c,c' \in \{1,2,3,4\}$.
Indeed, denote the graph that consists of these edges by $G^*$ and consider its drawing as inherited from $G$.
It is not hard to see that $G^*$ is a planar graph:
Suppose that two edges in $G^*$ cross and denote them by $(u,v)$ and $(x,y)$ such that $u,x \in V_{i,j}^c$ and $v,y \in V_{j,i}^{c'}$.
Since $u$ and $x$ are both of color $c$, there is no crossing-free edge in $G'_i$ that connects them.
Similarly, there is no crossing-free edge in $G'_j$ that connects $v$ and $y$.
Since there are also no crossing-free edges in $E''_{i,j}$, it follows that $(u,v)$ and $(x,y)$ are not independent,
a contradiction.

%Since $G^*$ has no independent crossing edges, it follows that $G^*$ is a planar graph.
Therefore, $G^*$ is a plane graph. Because $G^*$ is also bipartite, its number of edges is at most twice its number of vertices.
Thus, $$|E''_{i,j}| \leq 2\sum_{1 \leq c \leq 4}\sum_{1 \leq c' \leq 4} (|V_{i,j}^c|+|V_{j,i}^{c'}|) = 8(|V_{i,j}|+|V_{j,i}|),$$
and the lemma follows.
\qed
\end{proof}

\begin{lemma}
\label{lem:sum V_{i,j}}
$\sum_{j \neq i} |V_{i,j}| \leq 3(|V_i| + 4\deg_H(u_i))$ for every $1 \leq i \leq k$.
\end{lemma}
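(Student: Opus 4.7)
The plan is to adapt the $4$-coloring strategy used in the proof of Lemma~\ref{lem:E''_{i,j}}. Since $G'_i$ is planar, properly $4$-color its vertices, obtaining a partition $V_i = V_i^1 \cup V_i^2 \cup V_i^3 \cup V_i^4$ in which each $V_i^c$ is independent in $G'_i$. For each $c \in \{1,2,3,4\}$ I would form the bipartite graph $B_i^c$ whose parts are $V_i^c$ and $U_i := \{u_j^* : j \neq i,\, (u_i,u_j) \in E(H)\}$, placing an edge $(v, u_j^*)$ whenever $v \in V_{i,j} \cap V_i^c$. Since $\sum_{c=1}^{4} |E(B_i^c)| = \sum_{j \neq i}|V_{i,j}|$, it is enough to show that each $B_i^c$ is planar: the bipartite planar bound $|E(B_i^c)| \leq 2(|V_i^c| + \deg_H(u_i))$, summed over the four color classes, gives $\sum_{j\neq i}|V_{i,j}| \leq 2|V_i| + 8\deg_H(u_i) \leq 3(|V_i| + 4\deg_H(u_i))$.

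To prove planarity of $B_i^c$ I would draw it inside the drawing of $G$, in the spirit of the construction of $H'$ from the proof of Lemma~\ref{lem:H-planar}. First observe that because $G'_j$ is connected and all edges of $E'$ are crossing-free, each component $G'_j$ lies within a single face $f_j$ of $G'_i$. Place $u_j^*$ inside $f_j$, fix a spanning tree $T_j$ of $G'_j$, and identify one vertex of $T_j$ with $u_j^*$. For each $v \in V_{i,j} \cap V_i^c$, realize the corresponding edge of $B_i^c$ as the concatenation of an arbitrarily chosen edge $(v,w) \in E''_{i,j}$ (with $w \in V_j$) together with the unique $T_j$-path from $w$ to $u_j^*$; any overlaps can be perturbed away exactly as in the construction of $H'$.

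The heart of the argument is the claim that in this drawing of $B_i^c$ every crossing is between two edges sharing an endpoint. Because the edges of $E'$ (and hence of each $T_j$) are uncrossed in $G$, a crossing in $B_i^c$ can only arise from two $E''$-edges $(v,w) \in E''_{i,j}$ and $(v',w') \in E''_{i,j'}$ crossing in $G$, producing a crossing of $(v, u_j^*)$ and $(v', u_{j'}^*)$ in $B_i^c$. If $v = v'$ or $j = j'$, these two edges of $B_i^c$ share an endpoint, so we may assume $v \neq v'$ and $j \neq j'$. Then $v,v',w,w'$ are four distinct vertices lying in three distinct components $V_i, V_j, V_{j'}$ of $G'$, and the PCC hypothesis supplies a crossing-free edge joining some endpoint of $(v,w)$ to some endpoint of $(v',w')$; each of the three options $(v,w')$, $(w,v')$, $(w,w')$ would force two distinct components of $G'$ to merge, so the crossing-free edge must be $(v,v')$, hence an edge of $E'_i$. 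This is precisely what the coloring rules out: $v,v' \in V_i^c$ share a color in a proper coloring of $G'_i$, so they cannot be adjacent in $G'_i$, a contradiction.

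The main obstacle is this last case analysis, which the $4$-coloring trick is designed to make trivial: it eliminates the single crossing configuration not involving shared endpoints. Once it is in hand, every crossing of $B_i^c$ is between adjacent edges, so the strong Hanani--Tutte Theorem (invoked exactly as in Lemma~\ref{lem:H-planar}) gives planarity of $B_i^c$, and the computation sketched in the first paragraph completes the proof.
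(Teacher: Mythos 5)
Your proposal is correct and follows essentially the same route as the paper: the paper's auxiliary graph $H^c$ is exactly your $B_i^c$, drawn and perturbed the same way, with the same crossing analysis (different components rule out all connecting crossing-free edges except $(v,v')$, which the proper $4$-coloring kills) and the same appeal to the strong Hanani--Tutte theorem. The only difference is that you exploit bipartiteness to use the bound $2(|V_i^c|+\deg_H(u_i))$ where the paper uses the general planar bound $3(|V_i^c|+\deg_H(u_i))$, which gives a slightly better constant that still fits the stated inequality.
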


\begin{proof}
We use again ideas from the proofs of Lemma~\ref{lem:H-planar} and Lemma~\ref{lem:E''_{i,j}}.
Assume without loss of generality that $i=1$ and consider the graph $G'_1$.
Since $G'_1$ is a planar graph, we can properly color its vertices with four colors.
Denote the colors by $1,2,3,4$, and let $V_1^c$ (resp., $V_{1,j}^c$) be the vertices of color $c$ in $V_1$ (resp., $V_{1,j}$).
Clearly, $\sum_{j=2}^k |V_{1,j}| = \sum_{c=1}^4 \sum_{j=2}^k |V_{1,j}^c|$.
Therefore it is enough to consider $\sum_{j=2}^k |V_{1,j}^c|$ for a fixed color $c$.

Recall that in the proof of Lemma~\ref{lem:H-planar}, for $1 \leq i \leq k$,
we have identified $u_i$ with one of the vertices of $G'_i$ and denoted by $T_i$ a spanning tree of $G'_i$.
We define a graph $H^c$ whose vertex set consists of $V_1^c$ and the vertices $u_j$ that are adjacent to $u_1$ in $H$.
For each such vertex $u_j$ and every vertex $v_1 \in V_{1,j}^c$ % that is connected by at least one edge to a vertex in $V_j$,
pick arbitrarily an edge $(v_1,v_j)$ such that $v_j \in V_j$ (such an edge exists by the definition of $V_{1,j}$), and draw an edge $(v_1,u_j)$ as follows:
$(v_1,u_j)$ consists of the edge $(v_1,v_j)$ in $G$ and the unique path in $T_j$ from $v_j$ to $u_j$.

Observe that $H^c$ is a simple graph (i.e., it has no parallel edges or loops).
Moreover, in the drawing of $H^c$ that is obtained as above, all the crossing points are inherited from $G$,
however, there are overlaps between edges.
Still, each such (maximal) overlap contains an endpoint of an edge,
and thus, as in the proof of Lemma~\ref{lem:H-planar}, the edges of $H^c$
can be slightly perturbed so that all the overlaps are removed and no new crossings are introduced.

Consider such a drawing of $H^c$ and observe that if two edges cross in this drawing, then they must share an endpoint.
Indeed, suppose for contradiction that $(v_1,u_a)$ and $(v'_1,u_b)$ are two independent and crossing edges.
Then $G$ contains two independent and crossing edges $(v_1,v_a)$ and $(v'_1,v_b)$,
such that $v_1,v'_1 \in V_1$, $v_a \in V_a$, and $v_b \in V_b$.
Since these two edges are planarly connected, there should be a crossing-free edge that connects
a vertex in $\{v_1,v_a\}$ with a vertex in $\{v'_1,v_b\}$.
However, this is impossible because there is no crossing-free edge between two vertices from different
connected components of $G'$ and there is also no crossing-free edge $(v_1,v'_1)$ since both $v_1$ and $v'_1$ are of color $c$.

This implies that $H^c$ is a planar graph.
Observe that $\sum_{j=2}^k |V_{1,j}^c|$ is precisely the number of edges in $H^c$.
Thus, $\sum_{j=2}^k |V_{1,j}^c| \leq 3|V(H^c)| = 3(|V_1^c|+\deg_{H}(u_1))$, and it follows that
$\sum_{j=2}^k |V_{1,j}| = \sum_{c=1}^4 \sum_{j=2}^k |V_{1,j}^c| \leq 3|V_1|+12\deg_{H}(u_1)$.
\qed
\end{proof}

Recall that it remains to show that $|E''| = O(n)$:
$$
|E''|= \sum_{1 \leq i \leq k} |E''_{i,i}| + \sum_{1\leq i < j \leq k} |E''_{i,j}|$$
	$$\leq 96n + 8 \sum_{1\leq i < j \leq k} (|V_{i,j}|+|V_{j,i}|)$$
	$$= 96n + 8 \sum_{1\leq i \leq k} \sum_{j \neq i} |V_{i,j}|$$
	$$\leq 96n + 24 \sum_{1\leq i \leq k} (|V_i| + 4\deg_H(u_i))$$
	$$\leq 96n + 24n + 96\cdot 2|E(H)| \leq 120n + 192\cdot 3n = 696n.$$

Note that in the last inequality we used the fact that $H$ is a planar graph.
We conclude that $|E|=|E'|+|E''| \leq 699n$. Theorem~\ref{thm:main} is proved.

%%%%%%%%%%%%%%%%%%%%%%%%%%%%%%%%%%%%%%%%%%%%%%%%%%%%%%%%%%%%%%%%%%%%
\section{Discussion}
\label{sec:Discussion}
%%%%%%%%%%%%%%%%%%%%%%%%%%%%%%%%%%%%%%%%%%%%%%%%%%%%%%%%%%%%%%%%%%%%

Recall that we leave open the question of whether Theorem~\ref{thm:main} holds for PCC topological
graphs in which every pair of crossing edges shares a vertex or is planarly connected.

It would also be interesting to find the maximum size of an $n$-vertex PCC simple topological graph.
The proof of Theorem~\ref{thm:main} shows that this quantity is at most $699n$,
but we believe that a linear bound with a much smaller multiplicative constant holds.
Figure~\ref{fig:geza} describes a construction of an $n$-vertex PCC simple topological graph with $9n-O(1)$ edges.
This construction was given by G\'eza T\'oth~\cite{Geza}, and it improves a construction of ours
with $6.6n-O(1)$ edges that appeared in an earlier version of this paper.
\begin{figure}
    \centering
    \includegraphics[width=6cm]{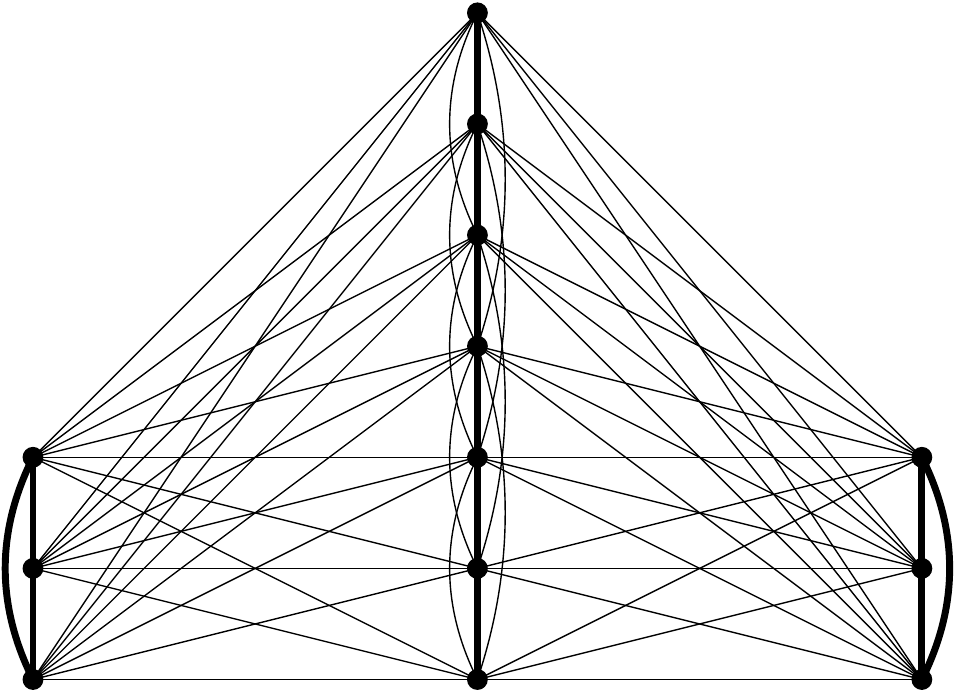}
	\caption{A construction of a topological PCC graph with $9n-O(1)$ edges.}
	\label{fig:geza}
\end{figure}
It goes as follows: place $n-6$ points on the $y$-axis, say at $(0,i)$ for $i=0,1,\ldots,n-7$;
for every $i=0,\ldots,n-8$ add a straight-line edge connecting $(0,i)$ and $(0,i+1)$ (these edges will be crossing-free);
for every $i=0,\ldots,n-9$ add an edge connecting $(0,i)$ and $(0,i+2)$ that goes slightly to the left of the $y$-axis;
for every $i=0,\ldots,n-10$ add an edge connecting $(0,i)$ and $(0,i+3)$ that goes slightly to the right of the $y$-axis;
add three points with the same $x$ coordinate to the left (resp., right) of the $y$-axis and connect each of them by straight-line
edges to each of the points on the $y$-axis;
connect every pair of points to the left (resp., right) of the $y$-axis by a crossing-free edge.
One can easily verify that the resulting graph is indeed a PCC simple topological graph and has $9n-O(1)$ edges.

\medskip

The notion of planarly connected edges can be generalized as follows.
For an integer $k \geq 0$, we say that two crossing edges $e$ and $e'$
in a topological graph $G$ are \emph{$k$-planarly connected}
if there is a path of at most $k$ \emph{crossing-free} edges in $G$
that connects an endpoint of $e$ with an endpoint of $e'$.
Call a graph \emph{$k$-planarly connected crossing} ($k$-PCC for short) graph if it can be drawn
as a topological graph in which every pair of crossing edges is $k$-planarly connected.
Thus, PCC graphs are $1$-PCC graphs.

For $k=0$, graphs that can be drawn as topological graphs in which every
pair of crossing edges share a vertex are actually planar graphs,
as noted in the proof of Lemma~\ref{lem:H-planar}.
For $k \geq 2$ we can no longer claim that a $k$-PCC graph is sparse.
Indeed, it is easy to see that $K_n$ is a $2$-PCC graph:
simply pick a vertex $v$ and draw it with all of its neighbors as a crossing-free star.
Now every remaining edge can be drawn such that we get a simple topological graph
in which for any two crossing edges there is a path (through $v$) of two crossing-free edges that connects
their endpoints.

Note that if $G$ is a $k$-PCC graph and $G'$ is a subgraph of $G$,
then this does not imply that $G'$ is also a $k$-PCC graph.
For example, it is not hard to see that for any $k$ there is a (sparse)
graph that is not $k$-PCC: simply replace every edge of $K_{5}$ (or any non-planar graph)
with a path of length $k+1$.
Call the resulting graph $G'$ and observe that any drawing of it must contain two independent and crossing edges
such that there is no path of length at most $k$ between their endpoints.
On the other hand, if $k \geq 2$ then clearly $G'$ is a subgraph of a $k$-PCC graph ($K_n$).

\medskip

We conclude with a few interesting questions one can ask about the notion of planarly connected crossings:
Is it possible to construct for any $n$ and $k$ a graph with quadratically many edges which is not $k$-PCC?
Can we recognize ($k$-)PCC  graphs efficiently?
Given that a graph is a ($k$-)PCC graph, is it possible to find efficiently such an embedding?

\subsubsection*{Acknowledgments.}
We thank G\'eza T\'oth for his permission to include his construction for a lower bound on the size of a PCC graph in this paper. We also thank an anonymous referee for pointing out an error in an earlier version of this paper.

Most of this work was done during a visit of the first author to the R\'enyi Institute
that was partially supported by the National Research, Development and Innovation Office -- NKFIH under the grant PD 108406 and by the ERC Advanced Research Grant no.\ 267165 (DISCONV). The second author was supported by the National Research, Development and Innovation Office -- NKFIH under the grant PD 108406 and K 116769 and by the J\'anos Bolyai Research Scholarship of the Hungarian Academy of Sciences. The third author was supported by Development and Innovation Office -- NKFIH under the grant SNN 116095.

\bibliography{pcc}

\begin{thebibliography}{10}
\providecommand{\url}[1]{\texttt{#1}}
\providecommand{\urlprefix}{URL }

\bibitem{Ack09}
Ackerman, E.: On the maximum number of edges in topological graphs with no four
  pairwise crossing edges. Discrete {\&} Computational Geometry  41(3),
  365--375 (2009), \url{http://dx.doi.org/10.1007/s00454-009-9143-9}

\bibitem{AF*14}
Ackerman, E., Fox, J., Pach, J., Suk, A.: On grids in topological graphs.
  Comput. Geom.  47(7),  710--723 (2014),
  \url{http://dx.doi.org/10.1016/j.comgeo.2014.02.003}

\bibitem{AFT12}
Ackerman, E., Fulek, R., T{\'{o}}th, C.D.: Graphs that admit polyline drawings
  with few crossing angles. {SIAM} J. Discrete Math.  26(1),  305--320 (2012),
  \url{http://dx.doi.org/10.1137/100819564}

\bibitem{AT07}
Ackerman, E., Tardos, G.: On the maximum number of edges in quasi-planar
  graphs. J. Comb. Theory, Ser. {A}  114(3),  563--571 (2007),
  \url{http://dx.doi.org/10.1016/j.jcta.2006.08.002}

\bibitem{AA*97}
Agarwal, P.K., Aronov, B., Pach, J., Pollack, R., Sharir, M.: Quasi-planar
  graphs have a linear number of edges. Combinatorica  17(1),  1--9 (1997),
  \url{http://dx.doi.org/10.1007/BF01196127}

\bibitem{BMP05}
Brass, P., Moser, W.O.J., Pach, J.: Research Problems in Discrete Geometry.
  Springer (2005)

\bibitem{CP92}
Capoyleas, V., Pach, J.: A {T}ur{\'{a}}n-type theorem on chords of a convex
  polygon. J. Comb. Theory, Ser. {B}  56(1),  9--15 (1992),
  \url{http://dx.doi.org/10.1016/0095-8956(92)90003-G}

\bibitem{Ch34}
Chojnacki, C.: {\"U}ber wesentlich unpl\"attbare {K}urven im dreidimensionalen
  {R}aume. Fundamenta Mathematicae  23(1),  135--142 (1934)

\bibitem{FP12}
Fox, J., Pach, J.: Coloring ${K}_k$-free intersection graphs of geometric
  objects in the plane. Eur. J. Comb.  33(5),  853--866 (2012),
  \url{http://dx.doi.org/10.1016/j.ejc.2011.09.021}

\bibitem{FP14}
Fox, J., Pach, J.: Applications of a new separator theorem for string graphs.
  Combinatorics, Probability {\&} Computing  23(1),  66--74 (2014),
  \url{http://dx.doi.org/10.1017/S0963548313000412}

\bibitem{KU14}
Kaufmann, M., Ueckerdt, T.: The density of fan-planar graphs. CoRR
  abs/1403.6184 (2014), \url{http://arxiv.org/abs/1403.6184}

\bibitem{Pa91}
Pach, J.: Notes on geometric graph theory. In: Goodman, J., Pollack, R.,
  Steiger, W. (eds.) Discrete and Computational Geometry: Papers from DIMACS
  special year, DIMACS series, vol.~6, pp. 273--285. AMS, Providence, RI (1991)

\bibitem{PRT06}
Pach, J., Radoi{\v{c}}i{\'{c}}, R., T{\'o}th, G.: Relaxing planarity for
  topological graphs. In: G{\H{o}}ri, E., Katona, G.O., Lov{\'a}sz, L. (eds.)
  More Graphs, Sets and Numbers, Bolyai Society Mathematical Studies, vol.~15,
  pp. 285--300. Springer, Berlin Heidelberg (2006)

\bibitem{PT97}
Pach, J., T{\'{o}}th, G.: Graphs drawn with few crossings per edge.
  Combinatorica  17(3),  427--439 (1997),
  \url{http://dx.doi.org/10.1007/BF01215922}

\bibitem{PSS07}
Pelsmajer, M.J., Schaefer, M., {\v{S}}tefankovi{\v{c}}, D.: Removing even
  crossings. J. Comb. Theory, Ser. {B}  97(4),  489--500 (2007),
  \url{http://dx.doi.org/10.1016/j.jctb.2006.08.001}

\bibitem{SW15}
Suk, A., Walczak, B.: New bounds on the maximum number of edges in
  k-quasi-planar graphs. Comput. Geom.  50,  24--33 (2015),
  \url{http://dx.doi.org/10.1016/j.comgeo.2015.06.001}

\bibitem{Geza}
T{\'o}th, G.: private communication (2015)

\bibitem{T70}
Tutte, W.: Toward a theory of crossing numbers. Journal of Combinatorial Theory
   8(1),  45--53 (1970),
  \url{http://www.sciencedirect.com/science/article/pii/S0021980070800072}

\end{thebibliography}
\bibliographystyle{splncs03}

\end{document}